\documentclass[12pt]{amsart}
\usepackage{amsmath,amsfonts, amsthm, amssymb}
\usepackage{microtype}
\usepackage{graphicx}
\usepackage{tikz}
\usepackage{thm-restate}
\usepackage{tikz}
\usepackage{float}
\usetikzlibrary{arrows.meta,positioning}

\usepackage[backend=biber]{biblatex}
\def\lsim{{\,\lesssim\,}}

\newcommand{\diam}{\operatorname*{diam}}

\usepackage{xcolor}

\newtheorem{theorem}{Theorem}

\newtheorem{lemma}[theorem]{Lemma}
\newtheorem{remark}{Remark}

\title{Exponential Localization of Spatial Random Permutations in One Dimension}
\author{ Reuben Drogin, Felipe Hern{\'a}ndez}
\date{\today}

\begin{document}
\begin{abstract}
We consider a class of random permutations
of the interval $[-n,n]$, in which points are typically displaced
a distance $O(W)$. We show the cycles are localized 
on the scale $W^3$, with an exponentially decaying tail bound.
Analogous to eigenfunctions of one dimensional random band matrices, 
the cycles are conjectured to be localized to the scale $W^2.$
\end{abstract}
\maketitle

\section{Introduction}

For $p\in [1,\infty]$ and $W\geq 1$,
consider a permutation $\pi:[-n,n]\to[-n,n]$
drawn from the following Boltzmann-Gibbs distribution
\begin{equation}\label{def:gibbs-distribution}
\mathbb{P}_{p,W,n}(\pi)\propto \exp\left(-\frac{1}{W^p}\sum_{i\in [-n,n]}|\pi(i)-i|^p\right).
\end{equation}
If $p=\infty$ we take $\mathbb{P}_{\infty,W,n}$
to be the uniform distribution on permutations which displace points
at most $W$, given by
\begin{equation}\label{def:p-infty-dist}
\mathbb{P}_{p,W,n}(\pi)\propto 1_{\max\limits_{i\in [-n,n]} |\pi(i)-i|\leq W}.
\end{equation}
This model and related models are sometimes called ``spatial random permutations.''
See below for further background and related work. The parameter $W$ can be thought of as the typical displacement of a point,
and we prove the cycles of such permutations
are \textit{exponentially localized} to scale $W^3$.

\begin{theorem}\label{thm:main-boltzman}
For any $p\in [1,\infty]$, $\lambda, n,W\geq 1$ and $j\in [-n,n]$ we have
$$\mathbb{P}_{p,W,n}\left(\diam C_{\pi}(j)\geq \lambda\right)\leq 2\exp\left(-c\frac{\lambda}{W^3}\right),$$
where $c>0$ is uniform in all parameters, and $C_{\pi}(j)$ is the cycle of $j$ in $\pi$ given by
$$C_{\pi}(j):=\{\pi^k(j): k\in \mathbb{Z}\}.$$
\end{theorem}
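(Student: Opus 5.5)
Our plan is a transfer-matrix / renewal argument that reduces the tail bound to a uniform lower bound on the probability that a window of length about $W^3$ contains a ``cut,'' meaning a point that no cycle straddles.

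\emph{Step 1 (cuts).} For a half-integer $x$, let $F_x(\pi)$ be the flux across $x$: the number of $i<x$ with $\pi(i)>x$. Since $\pi$ is a bijection, $F_x$ also equals the number of $i>x$ with $\pi(i)<x$. If $F_x=0$, then no cycle crosses $x$. Hence
$$\{\diam C_\pi(j)\geq\lambda\}\subseteq\{F_x>0 \text{ for all half-integers } x \text{ in } [j,j+\lambda/2] \text{ or for all } x \text{ in } [j-\lambda/2,j]\}.$$
It therefore suffices to show that for any interval $I$ of length $L$,
$$\mathbb{P}(F_x>0\ \forall x\in I)\le \exp(-cL/W^3),$$
and then apply a union bound over the two sides, which is where the factor $2$ comes from.

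\emph{Step 2 (Markov structure).} The configuration of ``open arcs'' at position $x$ consists of the sources below $x$ that map above it and the targets below $x$ that are hit from above, each recorded by its offset relative to $x$. The Gibbs weight factorizes along the line, so this boundary state is a Markov chain in $x$. In particular, conditioning on the boundary data at the left endpoint of a block makes the block independent of everything further to the left. We then split $I$ into $L/(CW^3)$ consecutive blocks of length $CW^3$. If we show that in each block, conditionally on arbitrary boundary data at its left end, a cut occurs with probability at least $\delta>0$ uniformly in all parameters, then the blocks give geometric decay $(1-\delta)^{L/(CW^3)}$, which is the claimed bound.

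\emph{Step 3 (main obstacle: uniform cut probability in a block).} Heuristically, $F_x$ behaves like a reflected walk with steps of size about $W$ and typical height about $W$. A better guess is that $F_x$ fluctuates on a scale that hits zero within distance about $W^2$, but we only aim for $W^3$. Our approach would be a switching (surgery) argument. Given $\pi$ with $F_x>0$ throughout a block, we modify $\pi$ near a chosen point $x$ by reconnecting the arcs crossing $x$. Concretely, if $a<x<b$ with $\pi(a)>x$ and $b>x$ with $\pi(b)<x$, we replace $\pi$ by $\pi\circ(a\,b)$, which reduces the crossing count. Each such swap changes the energy by at most $O(1)$ in the $W^p$ scaling whenever the endpoints lie within distance $O(W)$ of $x$. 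For $p=\infty$ we instead need to check that the displacement constraint is preserved.

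The flux is typically $O(W)$, and a large flux is exponentially unlikely by a comparison of the same kind. So reaching $F_x=0$ requires about $W$ swaps, and a naive count loses a factor $e^{-O(W)}$. To avoid this, we would average over the $W^3$ positions in the block and use a multi-valued map argument, counting preimages rather than taking a single swap sequence. The extra factor of $W$ in the scale beyond $W^2$ is what should pay for the combinatorial loss from the preimage multiplicity.

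If the direct switching approach proves too lossy, our fallback is a second-moment / variance argument on $F_x$. We would show that over a window of length $W^3$ the flux has enough independent fluctuation, of order $W$ per $W^2$-length stretch, that it drops to zero with probability at least $\delta$, using the Markov property from Step 2 to control the conditional law given the left boundary state. Proving this uniform-in-boundary-condition lower bound is where we expect essentially all of the difficulty to lie.
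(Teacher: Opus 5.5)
\textbf{There is a genuine gap, and it cannot be repaired within your strategy.} The inclusion in Step~1 is correct, and the boundary state in Step~2 does give a Markov structure. But the reduction demands far more than the theorem: a cut at $x$ ($F_x=0$) means that \emph{no} cycle of $\pi$ crosses $x$, not merely that the cycle of $j$ does not. Such cuts are exponentially rare in $W$. So the uniform lower bound $\delta$ that Step~3 needs for a window of length $CW^3$ is false, and neither the switching route nor the second-moment fallback can produce it.

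Take $p=\infty$. Here $F_x$ counts the $i\in(x-W,x)$ with $\pi(i)>x$, and $F_{x+1}-F_x\in\{-1,0,1\}$. It is typically of order $W$, because a point at distance $d<W$ to the left of $x$ has roughly a fraction $(W-d)/(2W)$ of its admissible images to the right of $x$. The event $F_x=0$ forces all $\sim W$ points of $(x-W,x)$ to map left of $x$ and all points of $(x,x+W)$ to map right of it. This is a large deviation, of probability heuristically $\prod_{d=1}^{W}\frac{W+d}{2W+1}\approx e^{-(1-\log 2)W}$ per side. Hence a window of length $CW^3$ contains a cut with probability about $CW^3e^{-cW}\to 0$.

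The same separation of scales is rigorous in the Mallows model with $q=1-1/W$. There the probability that a given point is a cut tends to $\prod_{i\ge 1}(1-q^i)=e^{-\Theta(W)}$, while by Gladkich--Peled the cycles live on scale $W^2$.

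So the $e^{-O(W)}$ loss you found when making $\sim W$ swaps is the true cost, not a counting artifact, and averaging over $W^3$ positions recovers only polynomial factors. Your picture of $F_x$ as a reflected walk returning to $0$ every $W^2$ steps conflates the size of one cycle with the spacing between cuts. About $W$ arcs, coming from many different cycles, cross every point, and all of them must avoid $x$ simultaneously. Separately, for $p<\infty$ the uniformity over arbitrary boundary data in Step~2 cannot hold either: the incoming flux can exceed $CW^3$, which makes a cut in the block impossible.

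\textbf{The missing idea is to operate on the cycle of $j$ alone.} For $\pi$ whose cycle of $0$ exceeds level $\lambda+2W$, the paper applies a single transposition that reconnects the first up-crossing and the last down-crossing of that level by this cycle.
\begin{itemize}
\item This detaches the portion of the cycle between the two crossings, so the new cycle of $0$ has maximum in $(\lambda,\lambda+2W]$.
\item All other arcs crossing the level are untouched.
\item Since the move is an uncrossing, it does not increase the energy.
\end{itemize}
Because only one swap is made, the preimage multiplicity is $\lesssim W^2$. For $p<\infty$ the analogue is a weighted count bounded, up to constants, by $W^2e^{-|\lambda-\max C_\tau(0)|^p/W^p}$. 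For $p=\infty$ this yields $\mathbb{P}(\max C_\pi(0)>\lambda+2W)\le(1-cW^{-2})\,\mathbb{P}(\max C_\pi(0)>\lambda)$. Iterating over levels spaced $2W$ apart, and running the same argument for the minimum, gives $\exp(-c\lambda/W^3)$. No global cut is ever needed.
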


\subsection{Background and Related Work}
The specific class of models considered in this paper has been considered before in various
works \cite{BR-2015Gibbs,GRU-2007,Muk-2016,B-2014}, and we mention two of particular relevance.
In \cite{FM-2021}, Fyodorov and Muirhead
conjectured that
$$\mathbb{E} \diam C_{\pi}(j)\sim \min(W^2, n),$$
and showed that when $p=1$ the permutations
exhibit ``band structure'' in the sense that $\mathbb{E}|\pi(0)-0| = c(1+o(1))W.$
Most recently, Zhong \cite{Zhong-2021}
proved this conjecture when $p=1$ using ``hit and run'' sampling.

The model \eqref{def:gibbs-distribution}
is part of a larger class of ``spatial random permutations'',
which have been considered as far back as the 1950s
by Feynman as a model for the quantum Bose gas in \cite{Feynman-1953} and
by Mallows as a model for ranking with noise in \cite{Mallows-1957}.
This class of models includes, among others, Manhattan pinball, the Lorentz mirror model, the interchange,
and the Mallows model in which one replaces the displacement in \eqref{def:gibbs-distribution}
with the number of inversions of $\pi$. The cycles in
all of these models are expected to exhibit
similar localization lengths, with this being
proven in dimension one in the Mallows model by Gladkich and Peled \cite{GAD-2012},
and in the interchange by Kozma and Sidoravicius in unpublished work.
The argument presented here is rather robust,
and can be adapted to those models, however,
without an estimate like the one in Remark \ref{rem:sharp},
it is not sharp by a power of $W$.

A major open question for this class of models, see \cite{T-93,BU-2009,GAD-2012,B-2014},
is to prove that cycles in these models are \textit{delocalized}, i.e. typically 
have size of order $n$,  if $d\geq 3$  
and localized, at the scale $O(e^{cW^2})$, if $d=2$, where $W$ is defined
properly for the model. Recently there has been
tremendous progress on this when $d\geq 5$ in various models
like the interchange, and Manhattan pinball,
see \cite{ES-2022,EAH-2025,EK-2026}. This is also known for
the interchange on the tree \cite{A-2003,H-2013}. Proving this in $d=2,3$ seems out of reach of current methods
(although see \cite{EP-2019} for a continuous model where this was shown).

Finally, we briefly mention the connection to a family of quantum models.
If one views the permutation $\pi$
as a matrix, then one can check that the entries of the eigenvectors
of this matrix have support on the cycles of $\pi$.
Hence, Theorem \ref{thm:main-boltzman} can be interpreted
as showing the eigenfunctions of this permutation matrix
are localized to scale $W^3$. The conjectured localization length
of $W^2$ is the same as what was conjectured
for eigenfunctions of \textit{random band matrices}. This conjecture
for one dimensional random band matrices was recently resolved, see \cite{YY-2025,ER-2025,D-2025}.
In $d\geq 2$ the eigenfunctions
of random band matrices are expected to exhibit the same
localization lengths as conjectured for cycles in spatial random permutations above.
For recent progress on this we refer the reader to \cite{DYYY-2025-2,DYYY-2025-3}.

\section{Proof of Theorem \ref{thm:main-boltzman}}
\subsection{Proof of Theorem 1 when $p=\infty$}
The proof is clearest when $p=\infty$ and we do that case first.
We prove it for $C_{\pi}(0)$, as the argument is the same for $C_{\pi}(j)$.
The idea is that if $C_{\pi}(0)$ crosses $\lambda + 2W>0$,
it must also cross $\lambda + 2W$ on the way back to $0$, see Figure 1 below.
Hence, one can find a transposition of two points $(a\ b)$ for which
$\max C_{\pi\circ (a\ b)}(0)\in (\lambda, \lambda + 2W].$
This allows us to bound the number of $\pi$ for
which $\max C_{\pi}(0)> \lambda + 2W$ by the
number of $\pi$ for which $\max C_{\pi}(0)\in (\lambda, \lambda + 2W].$
This argument is made precise below using the
``uncrossing map'' $\Phi_{\lambda}$.

\begin{lemma}\label{lem:one-step-infty}
For any $W,n\geq 1$ and $\lambda \geq 0$ we have
$$\mathbb{P}_{\infty, W,n}\left(\max C_{\pi}(0)> \lambda + 2W \right)\lsim W^2\mathbb{P}_{\infty, W,n}\left( \max C_{\pi}(0)\in (\lambda, \lambda + 2W]\right).$$
\end{lemma}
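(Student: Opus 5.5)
Since $\mathbb{P}_{\infty,W,n}$ is uniform on the set $S$ of permutations with $\max_i|\pi(i)-i|\le W$, the statement is a counting statement. Let $A=\{\pi\in S:\max C_\pi(0)>\lambda+2W\}$ and $B=\{\pi\in S:\max C_\pi(0)\in(\lambda,\lambda+2W]\}$. The plan is to build a map $\Phi_\lambda:A\to B$ that is at most $O(W^2)$-to-one. That gives $|A|\lsim W^2|B|$, which is the claim.

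\textbf{Construction of $\Phi_\lambda$.} Fix $\pi\in A$ and write the cycle of $0$ as $0=x_0,x_1=\pi(x_0),\dots,x_{L}=0$. Every step has $|x_{k+1}-x_k|\le W$ and the cycle reaches a point $>\lambda+2W$. Let $t=\lambda+W$ be the threshold. Going forward from $0$, let $k_1$ be the first index with $x_{k_1}\le t<x_{k_1+1}$, an upward crossing. Let $k_2>k_1$ be the first later index with $x_{k_2}>t\ge x_{k_2+1}$, a downward crossing. Since the cycle returns to $0$, such a $k_2$ exists. Set $a=x_{k_1}$, $b=x_{k_2}$ and $\pi'=\pi\circ(a\ b)$.

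Then $\pi'(a)=\pi(b)=x_{k_2+1}$ and $\pi'(b)=x_{k_1+1}$, so the cycle of $\pi$ splits. The cycle of $0$ in $\pi'$ becomes $x_0,\dots,x_{k_1},x_{k_2+1},\dots,x_L$, which cuts out the excursion above $t$. The piece $x_{k_1+1},\dots,x_{k_2}$ closes into its own cycle.

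\textbf{Checking $\pi'\in B$.} The new displacements are the key check. We have $x_{k_1}\in(t-W,t]$ and $x_{k_2+1}\in(t-W,t]$, so $|\pi'(a)-a|<W$. Similarly $b,x_{k_1+1}\in(t,t+W]$, so $|\pi'(b)-b|<W$. Hence $\pi'\in S$.

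The new cycle of $0$ consists of the points visited before the first up-crossing and after the first subsequent down-crossing. I would take the up-crossing to be the first one so that the prefix lies $\le t$. The suffix may cross $t$ again, and then $\max C_{\pi'}(0)$ could exceed $\lambda+2W$. To avoid this, I would instead choose $k_2$ to be the last down-crossing in the cycle, that is, the last index with $x_{k_2}>t$. Then the suffix also lies $\le t$. The remaining cycle $x_0..x_{k_1},x_{k_2+1}..x_L$ then lies $\le t=\lambda+W\le\lambda+2W$ and contains $a$.

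It also needs to exceed $\lambda$. Since $x_{k_1+1}>t$, we get $a\ge x_{k_1+1}-W>\lambda$. So $\max C_{\pi'}(0)\in(\lambda,\lambda+2W]$ and $\Phi_\lambda(\pi):=\pi'\in B$.

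\textbf{Bounding multiplicity.} Given $\pi'$ we recover $\pi=\pi'\circ(a\ b)$ once we know $a$ and $b$. We have $a\in C_{\pi'}(0)\cap(t-W,t]$, which gives at most $W+1$ choices. We have $b\in(t,t+W]$, which gives at most $W$ choices. So each $\pi'$ has at most $O(W^2)$ preimages, and $|A|\le O(W^2)|B|$.

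\textbf{Main obstacle.} The delicate step is choosing the crossing indices so that the spliced cycle of $0$ lies entirely in $(\lambda,\lambda+2W]$ while both new displacements stay $\le W$. Taking the first up-crossing and the last down-crossing around the level $\lambda+W$ handles both requirements. Once that choice is made, injectivity up to the $W^2$ factor is straightforward.
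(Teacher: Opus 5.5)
Your proposal is correct and is essentially the paper's proof. It uses the same uncrossing transposition, joining the first up-crossing of a fixed level to the last down-crossing (the paper's $i_{first}$ and $i_{last}$, matching your corrected choice of $k_2$), and the same $O(W^2)$ preimage count from both swapped points lying in windows of length $W$. The only difference is your level $\lambda+W$ in place of the paper's $\lambda+2W$; this is immaterial and simply places the image in $(\lambda,\lambda+W]$ instead of $(\lambda+W,\lambda+2W]$.
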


\begin{proof}
For any $\lambda \geq 0$ define the set $E_{\lambda} := \{\pi: \max C_{\pi}(0)> \lambda\}.$
It suffices to prove
\begin{equation}\label{eq:infty-turnaround}
\mathbb{P}_{\infty,W,n}(E_{\lambda+2W})\lsim W^2 \mathbb{P}_{\infty, W,n}(E_{\lambda+2W}^c \cap E_{\lambda})
\end{equation}
for all $\lambda \geq 0$. The idea is to define a map
$$\Phi_{\lambda}:E_{\lambda+2W}\cap S_{W} \to E_{\lambda+2W}^c\cap E_{\lambda}\cap S_{W},$$
where $S_W$ is the support of $\mathbb{P}_{\infty, W,n}$ given by
$$S_{W} := \{\pi: \max_{i\in [-n,n]} |\pi(i)-i|\leq W\},$$
such that
$$\max_{\tau\in E_{\lambda + 2W}^{c}\cap E_{\lambda}\cap S_W}\left|\Phi_{\lambda}^{-1}(\tau)\right|\lsim W^2.$$
The lemma follows since $\mathbb{P}:=\mathbb{P}_{\infty,W,n}$
is the uniform distribution on $S_{W}$ and so we can estimate
\begin{equation}\label{eq:pre-image-computation}
\begin{aligned}
\mathbb{P}(E_{\lambda + 2W})
&= \sum_{\tau \in E_{\lambda+2W}^{c}\cap E_{\lambda} \cap S_W}\mathbb{P}(\tau) \sum_{\pi\in \Phi_{\lambda}^{-1}(\tau)}\frac{\mathbb{P}(\pi)}{\mathbb{P}(\tau)}\\
& = \sum_{\tau \in E_{\lambda+2W}^{c}\cap E_{\lambda} \cap S_W}\mathbb{P}(\tau)\left|\Phi_{\lambda}^{-1}(\tau)\right|\\
& \lsim W^2\mathbb{P}(E_{\lambda+2W}^c\cap E_{\lambda}).
\end{aligned}
\end{equation}

To construct $\Phi_{\lambda}$ we take a $\pi\in E_{\lambda+2W}$
and apply a transposition which connects the first crossing of
$\lambda + 2W$ to the last crossing of $\lambda + 2W$. Precisely, for any
$\pi\in E_{\lambda+2W}\cap S_{W}$ define
\begin{align*}
i_{first}(\pi) & := \inf\{j\geq 0: \pi^{j}(0)\leq \lambda+2W < \pi^{j+1}(0)\},\\
i_{last}(\pi) & := \sup \{j\in[0,\text{Period}(\pi)) : \pi^{j+1}(0)\leq\lambda+2W<\pi^j(0)\},
\end{align*}
and
$$\Phi_{\lambda}(\pi) = \pi\circ (\pi^{i_{first}(\pi)+1}(0)\ \pi^{i_{last}(\pi)+1}(0)).$$
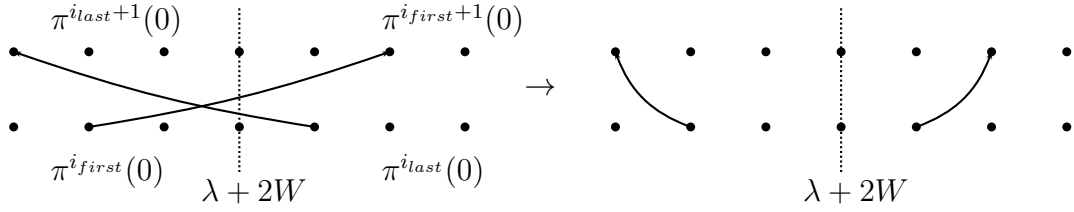
\begin{figure}[H]
\label{fig:perm-crossing}
  \centering

\begin{tikzpicture}[
  x=1.0cm, y=1.0cm,
  dot/.style={circle, fill, inner sep=1.2pt},
  >={Stealth[length=1.0mm]}
]

\path[use as bounding box] (-3.5,-0.2) rectangle (11.5,1.8);

\foreach \x in {-3,-2,-1,0,1,2,3} {
  \node[dot] at (\x,1) {};
  \node[dot] at (\x,0) {};
}


\node (topMid) at (0,1) {};
\node (botMid) at (0,0) {};

\draw[densely dotted, thick] (0,-0.6) -- (0,1.6);

\coordinate (bL) at (-2,0);
\coordinate (bR) at ( 1,0);
\coordinate (tL) at (-3,1);
\coordinate (tR) at ( 2,1);

\node[above right=2pt] at (-2.7,1) {$\pi^{i_{last}+1}(0)$};
\node[above right=2pt] at (1.7,-1) {$\pi^{i_{last}}(0)$};
\node[above right=2pt] at (1.7,1) {$\pi^{i_{first}+1}(0)$};
\node[above right=2pt] at (-2.7,-1) {$\pi^{i_{first}}(0)$};
\node[below right=2pt] at (-.7,-.5) {$\lambda+2W$};

\draw[->, thick, bend right=5]
  (bL) to node[midway, below, sloped] {} (tR);

\draw[->, thick, bend left=5]
  (bR) to node[midway, below, sloped] {} (tL);

\foreach \x in {5,6,7,8,9,10,11} {
  \node[dot] at (\x,1) {};
  \node[dot] at (\x,0) {};
}

\node at ( 4,.5) {$\to$};

\node (topMid) at (8,1) {};
\node (botMid) at (8,0) {};

\node[below right=2pt] at (7.3,-.5) {$\lambda+2W$};

\draw[densely dotted, thick] (8,-0.6) -- (8,1.6);

\coordinate (RbL) at (9,0);
\coordinate (RbR) at ( 6,0);
\coordinate (RtL) at (10,1);
\coordinate (RtR) at ( 5,1);


\draw[->, thick, bend right=25]
  (RbL) to node[midway, below, sloped] {} (RtL);

\draw[->, thick, bend left=25]
  (RbR) to node[midway, below, sloped] {} (RtR);

\end{tikzpicture}
  \caption{The action of the map $\Phi_{\lambda}$}
\end{figure}
Here we have used $(a\ b)$ for the permutation which transposes $a$ and $b$.
Note that $i_{first}(\pi)$ and $i_{last}(\pi)$ exist for any $\pi\in E_{\lambda+2W}$,
and furthermore for any
$\pi\in S_{W}\cap E_{\lambda + 2W}$ we have $\Phi_{\lambda}(\pi)\in E_{\lambda+2W}^c\cap E_{\lambda}\cap S_{W}$.

What is left is to establish that $|\Phi_{\lambda}^{-1}(\pi)|\lsim W^2$, but this
follows immediately since $\pi\in S_{W}$ implies that $\max_{i\in [-n,n]} |\pi(i)-i|\leq W$,
so the pair $(i_{last},i_{first})$ can take at most $\lsim W^2$ values.
The lemma follows from \eqref{eq:pre-image-computation}.
\end{proof}

The above lemma implies
$$\mathbb{P}_{\infty, W,n}\left(\max C_{\pi}(0)> \lambda + 2W \right)\leq (1-\frac{c}{W^2})\mathbb{P}_{\infty, W,n}( \max C_{\pi}(0)> \lambda),$$
and Theorem \ref{thm:main-boltzman} follows immediately by iterating this
and noting that the same argument can be applied to the minimum of a cycle.

\begin{remark}\label{rem:sharp}
The non-sharp $W^2$ above comes from
overestimating the size of the pre-image $\Phi_{\lambda}^{-1}(\pi)$.
To get the sharp factor of $W$ instead, it would
suffice to show that when $\max C_{\pi}(0)\leq \lambda$,
one typically has
$$\Phi^{-1}(\pi) = |\{(a,b):\pi\circ (a,b)\in S_{W},\ a\in C_{\pi}(0),\ and\ b,\pi(b)>\lambda\}|\lsim W.$$
Roughly, this should come from having $O(1)$ many choices for $a$
and $O(W)$ many choices for $b$ in the above set.
A heuristic explanation for this,
if $C_{\pi}(0)$ is viewed as a random walk,
is that random walks typically
only take $O(1)$ many steps near their maxima.
\end{remark}

\subsection{Proof of Theorem \ref{thm:main-boltzman} for $p\in [1,\infty)$.}
The proof in this case is nearly the same, except,
one must also account for how $\Phi_{\lambda}$
changes the energy of a permutation, given by $\sum_{i=-n}^n |\pi(i)-i|^p$.
Throughout the section we fix $p\in [1,\infty)$, $n,W\geq 1$, 
and let $\pi$ be sampled according to the distribution
in \eqref{def:gibbs-distribution}. All constants are uniform in $p,n, W$ and $\lambda$. 
The following lemma is analogous to Lemma \ref{lem:one-step-infty}, where the hard cutoff has now been replaced by an exponential weight.

\begin{lemma}\label{lem:p-turnaround}
For any $\lambda>0$ we have
\begin{align*}
\mathbb{P}(\max C_{\pi}(0)>\lambda)
  & \lsim W^2\sum_{j=0}^{\lambda} \exp\left(-\frac{|\lambda - j|^p}{W^p}\right)\mathbb{P}(\max C_{\pi}(0) = j).
\end{align*}
\end{lemma}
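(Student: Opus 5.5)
The plan is to repeat the uncrossing argument of Lemma \ref{lem:one-step-infty}, now crossing at level $\lambda$ itself instead of $\lambda+2W$. The hard support constraint is replaced by an energy comparison, and the $W^2$ preimage count is replaced by a weighted count that decays geometrically on scale $W$.

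\textbf{Uncrossing map.} For $\pi$ with $\max C_\pi(0)>\lambda$, let $a=\pi^{i_{first}}(0)\le \lambda<A=\pi(a)$ be the first upward crossing of $\lambda$ along the orbit of $0$. Let $b=\pi^{i_{last}}(0)>\lambda\ge B=\pi(b)$ be the last downward crossing. Set $\tau=\Phi_\lambda(\pi)=\pi\circ(A\ B)$, so that $\tau(a)=B$ and $\tau(b)=A$. As in the $p=\infty$ case, the orbit of $0$ under $\tau$ runs $0\to\cdots\to a\to B\to\cdots\to 0$ and cuts out the entire excursion above $\lambda$. Hence $j:=\max C_\tau(0)\le\lambda$, and $a$ and $B$ both lie in $C_\tau(0)$, so $a,B\le j$.

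Conversely, $\pi$ is recovered from $\tau$ and the pair $(a,b)$. Here $a\in C_\tau(0)$ is arbitrary, and $b$ is any point with $b>\lambda$ and $\tau(b)>\lambda$. Therefore
\[
\mathbb{P}(\max C_\pi(0)>\lambda)\le \sum_{j\le\lambda}\ \sum_{\tau:\max C_\tau(0)=j}\mathbb{P}(\tau)\sum_{(a,b)}e^{-\Delta(a,b)/W^p},
\]
where $\Delta$ is the energy increase
\[
\Delta=f(A-a)+f(b-B)-f(|B-a|)-f(|A-b|),\qquad f(x)=x^p .
\]

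\textbf{Key energy inequality.} This is the main obstacle. I would show that for $a,B\le\lambda<A,b$,
\[
\Delta\ \ge\ f\big(\lambda-\max(a,B)\big)+f\big(\min(A,b)-\lambda\big).
\]
For $p=1$ this is immediate: $\Delta=2(\min(A,b)-\max(a,B))$, which dominates the right side. For general $p\ge1$ I would use the standard fact that on the line a crossing matching costs more than the uncrossed matching for convex $f$, quantified as follows. Write the four points in order and use that $f$ is convex with $f(0)=0$, hence superadditive: $f(x+y)\ge f(x)+f(y)$. Splitting $A-a$ and $b-B$ at $\lambda$ and at $\max(a,B)$ and $\min(A,b)$ should leave exactly the two claimed leftover terms. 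I expect a short case analysis on whether $a\le B$ and whether $A\le b$, and this is where care is needed.

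\textbf{Summing over preimages.} Since $\max(a,B)\le j\le\lambda$, superadditivity gives
\[
f(\lambda-\max(a,B))\ge f(\lambda-j)+f(j-\max(a,\tau(a))).
\]
This extracts the factor $\exp(-|\lambda-j|^p/W^p)$.

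For each $k\ge0$, at most two $a\in C_\tau(0)$ satisfy $\max(a,\tau(a))=j-k$: either $a=j-k$ or $\tau(a)=j-k$. So the sum over $a$ is at most $2\sum_{k\ge0}e^{-k^p/W^p}\lsim W$. Similarly, at most two $b$ satisfy $\min(b,\tau(b))=\lambda+m$, so the sum over $b$ is at most $2\sum_{m\ge1}e^{-m^p/W^p}\lsim W$. Both bounds are uniform in $p\ge1$ because $e^{-k^p/W^p}\le e^{-k/W}$ for $k\ge W$.

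Combining these gives the inner sum $\lsim W^2 e^{-|\lambda-j|^p/W^p}$. Summing over $\tau$ with $\max C_\tau(0)=j$ then produces $\mathbb{P}(\max C_\pi(0)=j)$, which is the lemma.
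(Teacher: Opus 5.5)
Your proposal is correct and is essentially the paper's argument. It uses the same uncrossing map $\Phi_\lambda$ at level $\lambda$, the same parametrization of $\Phi_\lambda^{-1}(\tau)$ by pairs $(a,b)$ with $a\in C_\tau(0)$ and $b,\tau(b)>\lambda$, and the same convexity/superadditivity energy comparison followed by two $O(W)$ sums; the only cosmetic difference is that you split the energy gain at $\lambda$ directly instead of first bounding it by $|\min(b,\tau(b))-\max(a,\tau(a))|^p$. The case analysis you left open does go through: in each of the four orderings one gets $\Delta\ge 2(\min(A,b)-\max(a,B))^p$, which implies your claimed bound.
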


%
\begin{proof}
We again take $E_{\lambda}:= \{\pi: \max C_{\pi}(0)>\lambda \}$.
Fixing $\lambda> 0$ we take $i_{first}(\pi)$ and $i_{last}(\pi)$
the same as in the previous subsection but with $\lambda + 2W$ replaced 
by $\lambda$.
Note, they are well-defined for any $\pi\in E_{\lambda}$,
and thus we can also define
$$\Phi_{\lambda}:E_{\lambda }\to E_{\lambda}^{c},$$
via
$$\Phi_{\lambda}(\pi) = \pi \circ (\pi^{i_{first}(\pi)+1}(0)\ \pi^{i_{last}(\pi)+1}(0)).$$

Similar to \eqref{eq:pre-image-computation}, we have
$$\mathbb{P}(E_{\lambda}) = \sum_{\tau\in E_{\lambda}^c}\mathbb{P}(\tau)\sum_{\pi\in \Phi_{\lambda}^{-1}(\tau)}\frac{\mathbb{P}(\pi)}{\mathbb{P}(\tau)}.$$
Thus it suffices to show that for any $\tau \in E_{\lambda}^c$,
\begin{equation}\label{eq:suff-non-infty}
\sum_{\pi\in \Phi_{\lambda}^{-1}}\frac{\mathbb{P}(\pi)}{\mathbb{P}(\tau)}\lsim W^2 \exp\left(-\frac{|\lambda -\max C_{\tau}(0)|^p}{W^p}\right).
\end{equation}

Note that, as opposed to the $p=\infty$ case, $\mathbb{P}(\pi)$
depends on the energy of $\pi$, given by $\sum_{i} |\pi(i)-i|^p$.
So we must consider the \textit{energy}
as well as the \textit{entropy}.
The key observation is that $\Phi_{\lambda}$
\textit{uncrosses} points, see Figure 1, and thus decreases
the energy. Indeed, if $a, \tau (a)\leq\lambda < b, \tau(b)$,
then we have
\begin{align*}
\frac{\mathbb{P}(\tau \circ (\tau(a)\ \tau(b)))}{\mathbb{P}(\tau)}
& \leq \exp\left(-\frac{| \min(b, \tau(b))-\max(a, \tau(a))|^p}{W^p}\right).
\end{align*}
To apply this, note that any $\pi\in \Phi_{\lambda}^{-1}(\tau)$
equals $\tau\circ (\tau(a)\ \tau(b))$, for some $a$ and $b$ satisfying
the conditions above. Further, we must have
$a, \tau(a)\leq \max C_{\tau}(0)$, since $a$ is in the same cycle as $0$,
and $\tau(b),b>\lambda $.
Thus, summing over all such possibilities for $a$ and $b$ gives the estimate
\begin{align*}
\sum_{\pi\in\Phi_{\lambda}^{-1}(\tau)}\frac{\mathbb{P}(\pi)}{\mathbb{P}(\tau)}
& \lsim \sum_{a\leq\max C_{\tau}(0)}\sum_{b\geq\lambda } \exp\left(-\frac{|b-a|^p}{W^p}\right)\\
& \lsim \sum_{a\leq C_{\tau}(0)}W\exp\left(-\frac{|\lambda -a|^p}{W^p}\right)\\
& \lsim W^2\exp\left(-\frac{|\lambda -\max C_{\tau}(0)|^p}{W^p}\right),
\end{align*}
which implies \eqref{eq:suff-non-infty} and thus the lemma.
\end{proof}

Now we complete the proof of Theorem~\ref{thm:main-boltzman}.  
Set $p_j := \mathbb{P}(\max C_{\pi}(0)> j)$, so that Lemma~\ref{lem:p-turnaround}
implies there exists $C_0>0$ such that for any $k \geq 0$
$$p_{k+1}\leq C_0 W^2 \sum_{j=0}^{k}  \exp\left(-\frac{|k-j|^p}{W^p}\right) (p_{j}-p_{j+1}).$$
Setting $w_{j}^{(k)} := \exp\left(-|k-j|^p/W^p\right)$ for $0\leq j\leq k$
and  summing by parts yields
\[
p_{k+1} \leq C_0 W^2 \Big[- p_{k+1}  + \sum_{j=0}^k p_j (w_j^{(k)} - w_{j-1}^{(k)})\Big],
\]
with the convention $w_{-1}^{(k)} = 0$.  Rearranging one obtains
\[
p_{k+1} \leq (1-cW^{-2}) \sum_{j=0}^k p_j (w_j^{(k)}-w_{j-1}^{(k)})
\]
One can check the bound $p_{k}\leq 2\exp(-c_0\frac{k}{W^3})$ is propagated under this recurrence for $c_0>0$ small enough, 
proving the theorem.  
\printbibliography
\end{document}